\tikzset{partial ellipse/.style args={#1:#2:#3}{insert path={+ (#1:#3) arc (#1:#2:#3)}}}
\newcommand{\redline}{\raisebox{2pt}{\tikz{\draw[red,solid,thick](0.1,0) -- (0.4,0);}}}
\newcommand{\blueline}{\raisebox{2pt}{\tikz{\draw[blue,solid,thick](0.1,0) -- (0.4,0);}}}
\numberwithin{equation}{section}
\newtheorem{theorem}{Theorem}[section]
\newtheorem{assumption}{Assumption}
\newtheorem{corollary}[theorem]{Corollary}
\newtheorem{lemma}[theorem]{Lemma}
\newtheorem{proposition}[theorem]{Proposition}
\newtheorem{remark}[theorem]{Remark}
\newtheorem{problem}[theorem]{Problem}
\def\neweq#1{\begin{equation}\label{#1}}
\def\endeq{\end{equation}}
\newcommand{\R}{\mathbb{R}}
\newcommand{\RE}{\mathbb{R}}
\renewcommand{\div}{{\rm div}}
\newcommand{\om}{\Omega}
\newcommand{\omdel}{\Omega_{\delta}}
\newcommand{\omdd}{\Omega\setminus{D}}
\newcommand{\omd}{\Omega_D}
\newcommand{\eps}{\epsilon}
\begin{document}

\title{On a nonlinear model in domains with cavities arising from cardiac electrophysiology }

\author{Elena Beretta \footnote{ NYU Abu Dhabi,(corresponding editor), \texttt{eb147@nyu.edu}}\ ,
		M. Cristina Cerutti \footnote{Dipartimento di Matematica, Politecnico di Milano , \texttt{crisitina.cerutti@polimi.it}}\ ,
	Dario Pierotti \footnote{Dipartimento di Matematica, Politecnico di Milano , \texttt{dario.pierotti@polimi.it}}}

\date{\today}

\maketitle

\begin{abstract} 
In this paper we deal with the problem of determining perfectly insulating regions (cavities) from boundary measurements in a nonlinear elliptic equation arising from cardiac electrophysiology. With minimal regularity assumptions on the cavities, we first show well-posedness of the direct problem and then prove uniqueness for the inverse problem.
\end{abstract}
\vskip.15cm

{\sf Keywords.} cardiac electrophysiology; nonlinear elliptic equation; well-posedness; inverse problem; uniqueness.
\vskip.15cm

{\sf 2010 AMS subject classifications.}
35J25, ( 35J61 35N25, 35J20, 92C50)\ \\




\section{Introduction}
In this paper we analyze a mathematical model arising from applications in cardiac electrophysiology.
The goal is to collect boundary data and a mathematical description of the electrical activity of the heart in order to detect ischemic regions,  characterized by having conductivity properties different from the ones in the surrounding healthy tissue \cite{Pavarino_2014_book}.

In \cite{BCMP},  the stationary version of the {\it monodomain model} describing the electrical activity of the heart was investigated leading to the study of a Neumann problem for a semilinear elliptic equation. Ischemic regions were modeled as conductivity inhomogeneities of small size and with low conductivity compared to the surrounding medium. The presence of the inhomogeneity alters the electrical activity generating a perturbation in the transmembrane potential described in terms of an asymptotic expansion in the smallness parameter which contains information on the size and shape of the altered region. Based on the aforementioned results a topological gradient method was implemented in \cite{BMR} for the effective reconstruction of the inhomogeneities from boundary measurements of the potential.

In \cite{BRV} the authors analyzed the mathematical model in the case of inhomogeneities of arbitrary shape and size in the two-dimensional setting. In particular, the issue of reconstruction of the inhomogeneity from boundary measurements was addressed. It is well known that this is a highly nonlinear and severely ill-posed inverse problem. Indeed, even for its linear counterpart, the inverse {\it  conductivity problem},  uniqueness might be guaranteed only if infinitely many measurements of solutions are available and even with smoothness a-priori information on the unknown inclusion, the continuous dependence from the data is logarithmic.



It has been observed that, after myocardial infarction, and completed healing process, lethal ventricular ischemic tachycardia can appear. This is due to the presence of the infarct scar, a non-excitable tissue (mainly composed of collagen), that can be modeled as an electrical insulator \cite{Perez}, \cite{Relan},\cite{Fronteraetal}. The determination of these regions and their shape is then fundamental to perform successful radiofrequency ablation for the prevention of tachycardia. 

Mathematically, this leads to model the damaged regions as perfect
insulators, i.e. as a {\it cavities}. In this case, the inverse problem is expected to be more treatable. In fact, in the linear {\it conductivity equation} one boundary measurement is enough to recover in a stable way  smooth cavities, cf. \cite{ABRV}.  

We start with analyzing the well-posedness of the direct boundary value problem under minimal regularity assumptions on the cavities. 
Notably,  we establish a uniqueness result for the inverse problem showing that a single boundary measurement of the potential is enough to detect a finite collection of separated simply connected Lipschitz cavities.
More precisely, we consider the following boundary value problem
\begin{equation}\label{probcav1}
\left\{  \begin{array}{ll}    -\div(A(x)\nabla u)+u^3=f, & \hbox{in $\omdd\subset\mathbb{R}^2$} \\    \displaystyle{\frac{\partial^A u}{\partial\mathbf{n}}}=0, & \hbox{on $\partial(\omdd)$}\;,  \end{array}\right.\end{equation}
where, in the application we have in mind,  $A(x)$ is the {\it anisotropic conductivity tensor} in the heart tissue, $ \displaystyle{\frac{\partial^A u}{\partial\mathbf{n}}}$ is the \textit{current flux}, $D\subset\Omega$ denotes the \textit{ischemic region}. Finally, $f$ is a given instantaneous current applied to the heart tissue, usually in a confined region and expressing the {\it initial electrical stimulus} and $u=u(D)$ the {\it transmembrane potential}.

We first show well-posedness of (\ref{probcav1}) in a variational setting in $H^1(\Omega\backslash D)$, where $\Omega\backslash D$ has Lipschitz boundary, together with some crucial a priori $L^{\infty}$ bounds for the solutions that we derive using the truncation method of Stampacchia.  Subsequently,  by exploiting similar bounds for a suitable sequence of approximating problems, we prove the well-posedness of the direct problem in $H^1(\Omega\backslash D)$, with $\Omega\backslash D$ belonging to a special class of finite perimeter sets.
We point out that in non-Lipschitz domains Sobolev embeddings and extension properties do not  hold and therefore cannot be used to prove existence and uniqueness of the (weak) solution. The sequence of approximating problems is precisely defined in order to avoid this drawback. Moreover, the extension to this general class of cavities, allows us to treat the case of  cavities $D$ touching the boundary $\partial\om$, since in that case $\Omega\backslash D$ may not be Lipschitz even if $\om$ and $D$ are smooth.


Then we deal with the inverse problem of determining cavities $D$  from measurements of the solution $u=u(D)$ of (\ref{probcav1}) on some open arc $\Sigma$ of $\partial\Omega$. Here, we need to assume the cavities $D$ to have Lipschitz boundary. Using uniform $L^{\infty}(\Omega)$ estimates for solutions of  (\ref{probcav1}) and unique continuation properties for elliptic equations, \cite{ARRV}, we prove that one measurement of $u$ on $\Sigma$ is enough to uniquely determine a cavity $D$.

We expect to extend the uniqueness for the inverse problem in the three-dimensional setting in a forthcoming publication.

Let us finally point out that recently there has been  growing interest towards inverse boundary value problems for semilinear elliptic equations, see for example \cite{LLLS1}, \cite{LLLS2},\cite{CMHY},\cite{CF},\cite{KU}. 
In particular, we would like to mention \cite{LLLS1}, where the authors use the nonlinear Dirichlet to Neumann map to recover simultaneously the nonlinear term appearing in the equation and the cavity.
Here, instead, the nonlinearity is given and a collection of separated cavities have to be identified and in fact only one measurement suffices to determine them. Also, we would like to emphasize that our results could be easily extended to the case where
the conormal derivative of $u$ on part of  $\partial\Omega$ is different from zero and  more general nonlinearities are considered.   
\vskip 3truemm
The paper is organized as follows: in Section 2 we state our main assumptions. In Section 3 we analyze the well-posedness of Problem (\ref{probcav}) when $\Omega\backslash D$ is a Lipschitz domain. In Section 4 we extend it to the case when $D$ belongs to a special class of sets of finite perimeter and finally
in Section 5 we prove the uniqueness of the inverse problem in the class of Lipschitz cavities. 


\newpage


\section{Notation and main assumptions}

We consider the following inhomogeneous Neumann problem for a semilinear equation

\begin{equation}
\label{probcav}
\left\{
  \begin{array}{ll}
    -\div(A(x)\nabla u)+u^3=f, & \hbox{in $\omdd$} \\
    \displaystyle{\frac{\partial^A u}{\partial\mathbf{n}}}=0, & \hbox{on $\partial(\omdd)$}.
  \end{array}
\right.
\end{equation}

where we denote by $\displaystyle{\frac{\partial^A u}{\partial\mathbf{n}}}$  the \textit{conormal derivative} of $u$ defined as $A(x)\nabla u\cdot\mathbf{n}$, with $\mathbf{n}$ outer unit normal to $\Omega_D$.

In what follows we'll use the notation 

$$\omd=\omdd$$

Let's now state our main assumptions.
\begin{assumption}\label{as:1}
 $\om\subset \R^2$  is a bounded domain with Lipschitz boundary, $\partial\Omega$. 
\end{assumption}
\begin{assumption}\label{as:2} $\Sigma \subset \partial \Omega$ is an open arc, the portion of boundary which is accessible for measurement.
and  $\Omega_1\subset\Omega$ is such that $\partial\Omega_1$ is Lipschitz and $\Sigma \subset \partial\Omega_1\cap\partial\Omega$.
\end{assumption}
\begin{assumption}\label{as:3}
$D\in\mathcal{D}$ defined by
$$
\mathcal{D}=\{D=\cup_{j=1}^N D_j\, \subset\overline\Omega: D_j \textrm{compact, simply connected}, {\rm int}(D_j)\neq\emptyset, {\rm dist}(D,\overline{\Omega}_1)>0,\, \forall j=1,\dots,N \}
$$
\end{assumption}
Notice that we do not exclude the case that $\partial\Omega\cap\partial D\neq \emptyset$.
\vskip 2truemm
Finally, on the equation parameters we assume
\begin{assumption}\label{as:5}
$A(x)$ is a symmetric matrix of order $2$ satisfying the boundedness and ellipticity conditions 
\begin{equation}
\label{ellipcond}
0<\lambda\|\mathbf{\xi}\|^2\leq\sum_{i,j=1}^2 a_{ij}(x)\xi_i\xi_j\leq\Lambda\|\mathbf{\xi}\|^2, \ \\\ \forall\mathbf{\xi}=(\xi_1,\xi_2)\in\R^2
\end{equation}
where $\lambda,\Lambda$ are positive constants.
\end{assumption}

Assumptions on the source term $f$ will be specified in the sequel when necessary.

\vskip 5truemm




\begin{figure}
\centering
{\includegraphics[width=0.40\textwidth]{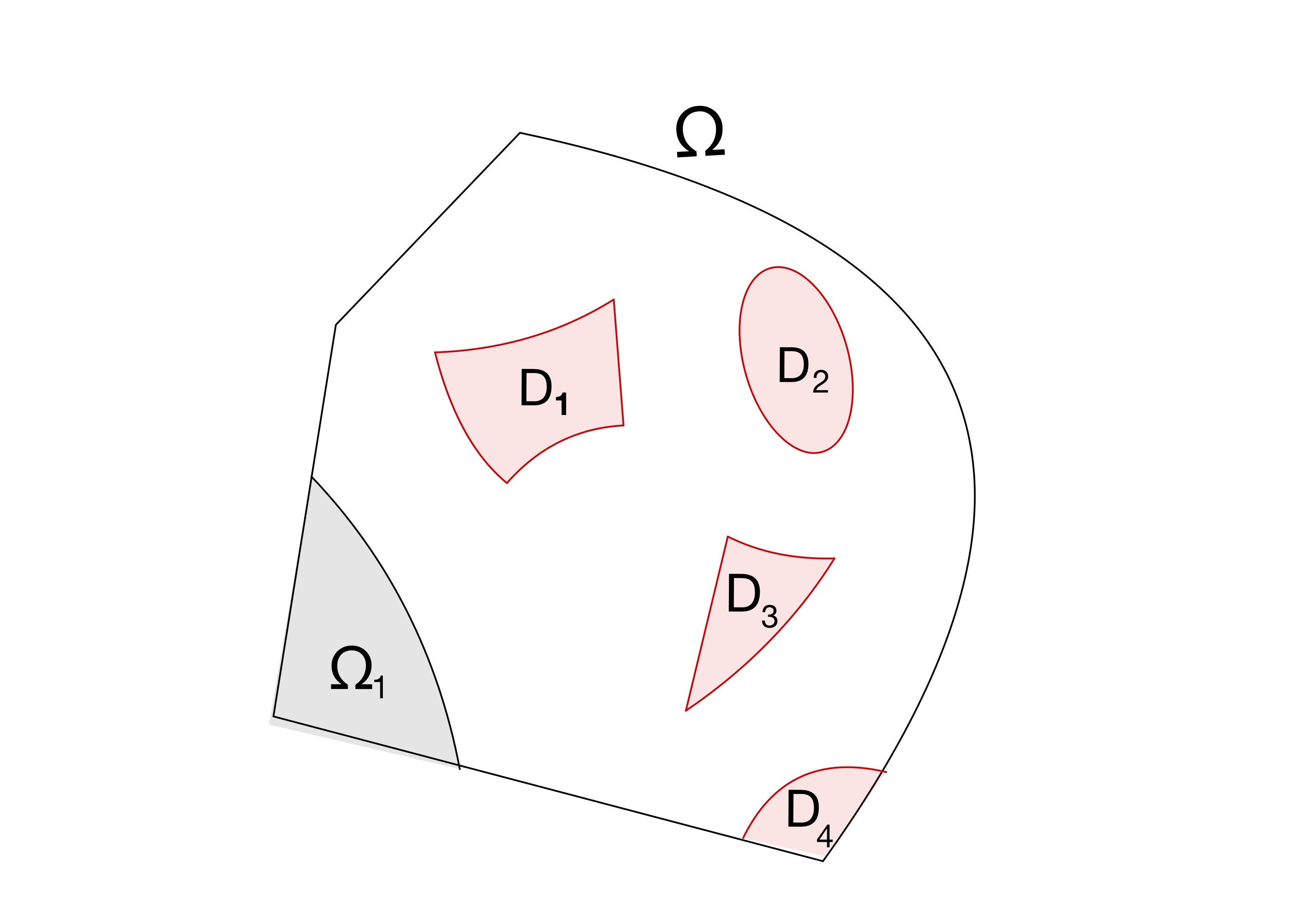}}
\caption{Example of domain with cavities}\label{fig:cav}
\end{figure}

\section{The direct problem - Well posedness when $\omd$ has Lipschitz boun\-da\-ry}

\subsection{Existence and uniqueness of the weak solution}


\smallskip
We will first show that Problem (\ref{probcav}) (in weak form) is well posed under the additional assumption that $\omd$ has Lipschitz boundary. In this case, the result holds for general sources $f\in H^{1}(\omd)'$, the dual space {of $H^1(\omd)$}.

\begin{theorem}
\label{exist}
Assume that $\omd$ has Lipschitz boundary and that $f\in H^{1}(\omd)'$. Then problem \eqref{probcav} has a unique solution $u\in H^1(\omd)$.
\end{theorem}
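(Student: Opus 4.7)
The plan is to prove existence and uniqueness via minimization of the associated energy functional. Working in $H^1(\Omega_D)$, the weak formulation amounts to finding $u$ such that
\begin{equation*}
\int_{\Omega_D} A(x)\nabla u\cdot\nabla\varphi\,dx+\int_{\Omega_D}u^3\varphi\,dx=\langle f,\varphi\rangle\qquad\forall\,\varphi\in H^1(\Omega_D),
\end{equation*}
which formally corresponds to the Euler--Lagrange equation of
\begin{equation*}
J(u)=\frac{1}{2}\int_{\Omega_D}A(x)\nabla u\cdot\nabla u\,dx+\frac{1}{4}\int_{\Omega_D}u^4\,dx-\langle f,u\rangle.
\end{equation*}
Since $\Omega_D$ is a bounded Lipschitz domain in $\mathbb{R}^2$, the embedding $H^1(\Omega_D)\hookrightarrow L^p(\Omega_D)$ is continuous for every $p\in[1,\infty)$ and compact; in particular $J$ is well defined on $H^1(\Omega_D)$ and the trilinear/quartic terms make sense.

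First I would verify coercivity. Using Assumption \ref{as:5}, $\tfrac{1}{2}\int A\nabla u\cdot\nabla u\geq\tfrac{\lambda}{2}\|\nabla u\|_{L^2}^2$, while H\"older's inequality gives $\|u\|_{L^2}^2\leq|\Omega_D|^{1/2}\|u\|_{L^4}^2$. Hence for $\|u\|_{H^1}$ large either $\|\nabla u\|_{L^2}$ is large and the Dirichlet term dominates the linear $f$-term, or $\|u\|_{L^2}$ is large, in which case $\tfrac{1}{4}\|u\|_{L^4}^4$ grows quartically in $\|u\|_{L^2}$ and hence dominates $|\langle f,u\rangle|\le\|f\|_{H^1(\Omega_D)'}\|u\|_{H^1}$. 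This yields $J(u)\to+\infty$ as $\|u\|_{H^1(\Omega_D)}\to\infty$. Next, weak lower semicontinuity: the quadratic Dirichlet part is convex and continuous on $H^1$, hence weakly lsc; and if $u_n\weak u$ in $H^1(\Omega_D)$, Rellich--Kondrachov gives $u_n\to u$ in $L^4(\Omega_D)$, so $\int u_n^4\to\int u^4$. The linear term is continuous for the weak topology. A standard direct method argument then produces a minimizer $u\in H^1(\Omega_D)$.

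For uniqueness I would exploit strict convexity: $u\mapsto\tfrac{1}{4}\int u^4$ is strictly convex and $u\mapsto\tfrac{1}{2}\int A\nabla u\cdot\nabla u$ is convex, so $J$ is strictly convex and the minimizer is unique. Alternatively, subtracting the weak equations satisfied by two solutions $u_1,u_2$ and testing with $u_1-u_2$ yields
\begin{equation*}
\int_{\Omega_D}A\nabla(u_1-u_2)\cdot\nabla(u_1-u_2)\,dx+\int_{\Omega_D}(u_1^3-u_2^3)(u_1-u_2)\,dx=0,
\end{equation*}
and since both integrands are non-negative (using the algebraic identity $(a^3-b^3)(a-b)=(a-b)^2(a^2+ab+b^2)\ge0$) and the first is bounded below by $\lambda\|\nabla(u_1-u_2)\|_{L^2}^2$, we deduce $u_1-u_2\equiv c$ constant, and then plugging back in forces $c=0$ (the identity $c(3u_2^2+3cu_2+c^2)=0$ a.e.\ is incompatible with $c\ne0$ because that quadratic in $u_2$ has negative discriminant $-3c^2$).

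Finally, I would compute the Fr\'echet derivative of $J$, verifying that $J$ is $C^1$ on $H^1(\Omega_D)$ thanks to $H^1\hookrightarrow L^4$, and conclude that any minimizer satisfies the weak formulation. The step I expect to be the most delicate is the coercivity argument, because there is no homogeneous boundary condition and therefore no direct Poincar\'e inequality on $H^1(\Omega_D)$; the argument has to lean genuinely on the superlinear growth of the quartic absorption term to control the $L^2$-mass of $u$ by $\|u\|_{L^4}^4$.
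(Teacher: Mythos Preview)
Your proof is correct and follows essentially the same approach as the paper: both minimize the energy functional $E(u)=\tfrac12\int A\nabla u\cdot\nabla u+\tfrac14\int u^4-\langle f,u\rangle$ via the direct method, using the quartic term and the Sobolev embedding to obtain coercivity in the absence of a Poincar\'e inequality, and then derive uniqueness from strict monotonicity/convexity. The only cosmetic difference is that the paper packages the argument in the language of monotone potential operators (showing $T$ is bounded, strictly monotone and coercive, and then invoking a general theorem), whereas you carry out the weak lower semicontinuity and coercivity of $J$ by hand using Rellich compactness in $L^4$; both routes rely on the same ingredients.
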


\begin{proof}
By multiplying the equation in \eqref{probcav} by a test function $\phi$, integrating by parts and using the Neumann boundary condition, we obtain the weak formulation

\begin{equation}
\label{weakfucav}
\int_{\omd} (A(x)\nabla u)\cdot\nabla\phi + \int_{\omd}u^3\phi=\int_{\omd} f\phi  , \quad\quad\forall\phi\in H^1(\omd).
\end{equation}
Now let $T:H^1(\omd)\longrightarrow H^{1}(\omd)'$ be the operator defined by
\begin{equation}
\nonumber
\langle Tu,\phi\rangle =\int_{\omd} (A(x)\nabla u)\cdot\nabla\phi + \int_{\omd}u^3\phi,\quad\quad\forall \phi\in H^1(\omd)
\end{equation}
It is readily verified that $T$ is a \emph{potential operator}, that is
$Tu-f$ is the derivative of the functional
\begin{equation}
\label{funz}
E(u)=\frac{1}{2}\int_{\omd} (A(x)\nabla u)\cdot\nabla u+\frac{1}{4}\int_{\omd}u^4-\int_{\omd} fu
\end{equation}
Then, the theorem will follow by showing that $T$ is \emph{bounded, strictly monotone and coercive}; in fact, by these properties of $T$ the functional $E$ is coercive and weakly lower semicontinuous on $H^1(\omd)$ (see e.g. \cite{fucik}, theorem 26.11). Thus, $E$ is bounded from below and attains its infimum at some $u\in H^1(\omd)$ satisfying
$Tu=f$. The uniqueness of $u$ is a consequence of the strict monotonicity of $T$; for, if $Tu=Tv=f$, equation \eqref{strictmon} below implies $u=v$.

\bigskip

{\bf i.} $T$ is bounded.

\bigskip

By H\"older's inequality and ellipticity condition \eqref{ellipcond}
\begin{equation}
\nonumber
|\langle Tu,\phi\rangle|\leq\Lambda\|\nabla u\|_{L^2(\omd)}\|\nabla\phi\|_{L^2(\omd)} + \|u\|_{L^6(\omd)}^3\|\phi\|_{L^2(\omd)}
\end{equation}

and by Sobolev embedding theorem  $\|u\|_{L^6(\omd)}\leq C_S\|u\|_{H^1(\omd)}$, so that

\begin{equation}
\nonumber
|\langle Tu,\phi\rangle|\leq\Lambda\|\nabla u\|_{L^2(\om)}\|\nabla\phi\|_{L^2(\omd)} + C^3_S\|u\|^3_{H^1(\omd)}\|\phi\|_{L^2(\omd)}\leq\max\left[\Lambda\|u\|_{H^1(\omd)},C^3_S\|u\|_{H^1(\omd)}^3\right]\|\phi\|_{H^1(\omd)}.
\end{equation}

Therefore, if $u$ belongs to a bounded subset of $H^1(\omd)$,

\begin{equation}
\nonumber
\|Tu\|_{H(\omd)'}=\sup_{\phi}\frac{|\langle Tu,\phi\rangle|}{\|\phi\|_{H^1(\omd)}}\leq\max\left[\Lambda\|u\|_{H^1(\omd)},C^3_S\|u\|_{H^1(\omd)}^3\right]=C_2.
\end{equation}

\bigskip

{\bf ii.} $T$ is (strictly) monotone.

\bigskip

\begin{equation}
\nonumber
\langle Tu-Tv,u-v\rangle=\int_{\omd} (A(x)\nabla (u-v))\cdot\nabla (u-v) + \int_{\omd} (u-v)^2(u^2+uv+v^2)\geq0.
\end{equation}

Furthermore

\begin{equation}
\label{strictmon}
\langle Tu-Tv,u-v\rangle=0 \ \ \ \ \ \Leftrightarrow \ \ \ u=v.
\end{equation}

\bigskip

{\bf iii.} $T$ is coercive, that is
\begin{equation}
\label{coerc}
\lim_{\|u\|_{H^1(\om)}\to +\infty}\frac{\langle Tu,u\rangle}{\|u\|_{ H^1(\omd)}}=+\infty
\end{equation}

By using again H\"older's inequality,

\begin{equation}
\nonumber
\langle Tu,u\rangle\geq \lambda\int_{\omd} |\nabla u|^2 + \int_{\omd} u^4\geq \lambda\|\nabla u\|^2_{L^2(\omd)}+\frac{1}{|\omd|}
\left(\int_{\omd}u^2\right)^2\geq \lambda \|\nabla u\|^2_{L^2(\omd)}+\frac{1}{|\omd|}
\|u\|^4_{L^2(\omd)}=
\end{equation}

\begin{equation}
\nonumber
=\lambda\left(\|\nabla u\|^2_{L^2(\omd)}+\|u\|^2_{L^2(\omd)}\right)+\frac{1}{|\omd|}\;
\|u\|^4_{L^2(\omd)}-\lambda\|u\|^2_{L^2(\omd)}.
\end{equation}
\medskip
Finally, by $|\omd|^{-1}x^4-\lambda x^2 \ge -\lambda^2|\omd|/4$,  we get
\begin{equation}
\label{stimacoerc}
\langle Tu,u\rangle\geq \lambda\|u\|^2_{H^1(\omd)}-\frac{\lambda^2}{4}|\omd|
\end{equation}
hence, \eqref{coerc} follows.
\end{proof}

\begin{remark}
It is worthwhile to stress that theorem \ref{exist} also holds when the conormal derivative of $u$ is different from zero on $\partial\Omega_{D}$ .
Actually, in this case the right hand side in \eqref{weakfucav} can be written as $<f,\phi>=\int_{\omd}f_0\phi+\int_{\Gamma}g\phi\,|_{\partial\Omega_{D}}$, with $f_0\in L^2(\omd)$, 
$g\in L^2(\partial\Omega_{D})$.
\end{remark}

\begin{remark}
\label{dim3}
One can easily check that the above proof can be extended when $\omd\subset \mathbb{R}^3$, as the Sobolev embedding  $\|u\|_{L^6(\omd)}\leq C_S\|u\|_{H^1(\omd)}$ still holds.
\end{remark}






\subsection{A priori bounds for the solution}

In this section we will prove estimates of the solutions to Problem (\ref{probcav}) which will be useful in the subsequent discussion. To begin with, we have the following stability estimate:

\begin{proposition}
\label{mainest}
Let $u\in H^1(\omd)$ be a solution of \eqref{probcav}. Then
\begin{equation}
\label{apriori}
\|u\|_{H^1(\omd)}\le \frac{1}{\lambda}\,\|f\|_{(H^1)'}+|\omd |^{1/3}\|f\|^{1/3}_{(H^1)'}
\le C(\|f\|_{(H^1)'}+\|f\|^{1/3}_{(H^1)'})
\end{equation}
where the constant $C=\max\{\frac{1}{\lambda}, |\omd|^{1/3}\}$ and $(H^1)'=H^1(\omd)'$.
\end{proposition}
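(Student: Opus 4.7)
The plan is to test the weak formulation \eqref{weakfucav} with $\phi = u$ itself, which is the natural energy identity, then unpack the resulting inequality into two complementary bounds and combine them via a short case analysis to isolate $\|u\|_{H^1(\omd)}$.

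Concretely, taking $\phi=u$ in \eqref{weakfucav} and using the ellipticity condition \eqref{ellipcond} gives
\[
\lambda\,\|\nabla u\|_{L^2(\omd)}^2 + \|u\|_{L^4(\omd)}^4 \;\le\; \langle f,u\rangle \;\le\; \|f\|_{(H^1)'}\,\|u\|_{H^1(\omd)}.
\]
Since both terms on the left are nonnegative, I would simultaneously extract two consequences: the gradient bound $\|\nabla u\|_{L^2(\omd)}^2 \le \frac{1}{\lambda}\|f\|_{(H^1)'}\|u\|_{H^1(\omd)}$ and the $L^4$ bound $\|u\|_{L^4(\omd)}^4 \le \|f\|_{(H^1)'}\|u\|_{H^1(\omd)}$. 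Hölder's inequality then upgrades the latter to an $L^2$ bound,
\[
\|u\|_{L^2(\omd)}^2 \;\le\; |\omd|^{1/2}\,\|u\|_{L^4(\omd)}^2 \;\le\; |\omd|^{1/2}\bigl(\|f\|_{(H^1)'}\|u\|_{H^1(\omd)}\bigr)^{1/2}.
\]
Adding the two controls and abbreviating $N=\|u\|_{H^1(\omd)}$, $F=\|f\|_{(H^1)'}$, $M=|\omd|$ yields the key nonlinear inequality
\[
N^2 \;\le\; \frac{F}{\lambda}\,N + (MFN)^{1/2}.
\]

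The main technical step—and the one I expect to require the most care—is extracting the clean bound $N\le F/\lambda + M^{1/3}F^{1/3}$ from this mixed inequality. The trick is a case split: if $N\le F/\lambda$, the claim is immediate, so assume $z:=N-F/\lambda>0$. Then the inequality reads $Nz\le(MFN)^{1/2}$, i.e.\ $Nz^2\le MF$, and since $N=z+F/\lambda>z$ one gets $z^3\le MF$, hence $z\le M^{1/3}F^{1/3}$, which gives the first stated inequality in \eqref{apriori}. The second inequality in \eqref{apriori} then follows trivially by choosing $C=\max\{1/\lambda,|\omd|^{1/3}\}$.

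In summary, the only nontrivial ingredient is rearranging a mixed-degree scalar inequality; everything else is standard energy estimation. There are no regularity or well-posedness issues to worry about, since Theorem~\ref{exist} already provides $u\in H^1(\omd)$ as an admissible test function in \eqref{weakfucav}.
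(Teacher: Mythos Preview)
Your proof is correct and follows essentially the same approach as the paper: both test \eqref{weakfucav} with $\phi=u$, extract the gradient and $L^2$ bounds via H\"older, add them to obtain the mixed inequality $N^2\le \frac{F}{\lambda}N+(MFN)^{1/2}$, and then perform the same case split on the sign of $z=N-F/\lambda$, using $N\ge z$ to conclude $z^3\le MF$. The only cosmetic difference is that the paper factors out $N^{1/2}$ before the case split whereas you square first, which is algebraically equivalent.
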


\begin{proof}
Letting $\phi=u$ in equation \eqref{weakfucav} and by the ellipticity condition  \eqref{ellipcond}, we readily get
\begin{equation}
\label{stim1}
\lambda\|\nabla u\|^2_{L^2(\omd)}+\int_{\omd}u^4\le\|f\|_{(H^1)'}\|u\|_{ H^1(\omd)}.
\end{equation}
By the above inequality we first obtain
\begin{equation}
\nonumber
\|\nabla u\|^2_{L^2(\omd)}\le \frac{\|f\|_{(H^1)'}}{\lambda}\|u\|_{ H^1(\om)}.
\end{equation}
Furthermore, by the inequality
\begin{equation}
\nonumber
\|u\|^4_{L^2(\omd)}\le |\omd|\,\int_{\omd}u^4
\end{equation}
and again by \eqref{stim1} we get
\begin{equation}
\nonumber
\|u\|^2_{L^2(\omd)}\le \big (|\omd|\,\|f\|_{(H^1)'}\|u\|_{ H^1(\omd)}\big )^{1/2}.
\end{equation}
Then
\begin{equation}
\label{estint}
\|u\|^2_{H^1(\omd)}\le \frac{1}{\lambda}{\|f\|_{(H^1)'}}\|u\|_{ H^1(\omd)}+{\big (|\omd|\,\|f\|_{(H^1)'}\big )^{1/2}}\|u\|^{1/2}_{ H^1(\omd)}.
\end{equation}
We can rewrite the above estimate in the form
\begin{equation}
\nonumber
\|u\|^{1/2}_{H^1} \Big(\|u\|_{H^1}-\frac{1}{\lambda}{\|f\|_{(H^1)'}}  \Big)\le |\omd|^{1/2}\|f\|^{1/2}_{(H^1)'}.
\end{equation}
Now, either
$$
\|u\|_{H^1(\omd)}\le \frac{1}{\lambda}\,\|f\|_{(H^1)'}
$$
or
\begin{equation}
\nonumber
\Big (\|u\|_{H^1}-\frac{1}{\lambda}{\|f\|_{(H^1)'}}  \Big )^{3/2}\le |\omd|^{1/2}\|f\|^{1/2}_{(H^1)'}
\end{equation}
In both cases \eqref{apriori} holds.
\end{proof}

\smallskip
Another crucial bound for the solution follows by the maximum principle, with some additional assumption on the source $f$ .

\begin{theorem}
\label{Linfinity}
Let $u\in H^1(\omd)$ be a solution of \eqref{probcav} with $f\in L^2(\omd)$. Then 
\begin{equation}\label{boundsu}
\left(ess\,\inf_{\omd}f\right)^{1/3}\leq u(x) \leq\left(ess\,\sup_{\omd}f\right)^{1/3}\quad\quad \mathrm{a.e.}\quad x\in\omd\,.
\end{equation}
\end{theorem}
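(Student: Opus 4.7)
The natural approach is Stampacchia's truncation method, as announced in the introduction. Set $M := (\mathrm{ess\,sup}_{\omd} f)^{1/3}$ and $m := (\mathrm{ess\,inf}_{\omd} f)^{1/3}$; the idea is to test the weak formulation \eqref{weakfucav} against $(u-M)^+$ to obtain $u\le M$ a.e., and against $(u-m)^-$ to obtain $u\ge m$ a.e. If $\mathrm{ess\,sup}_{\omd} f=+\infty$ the upper bound is vacuous, so I may assume $M<\infty$ (and similarly for $m$).

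For the upper bound, since $\omd$ is bounded and Lipschitz the constant $M$ belongs to $H^1(\omd)$, hence by Stampacchia's truncation lemma $\phi:=(u-M)^+\in H^1(\omd)$ with $\nabla\phi=\nabla u\,\chi_{\{u>M\}}$ a.e. Substituting $\phi$ into \eqref{weakfucav} and using the ellipticity condition \eqref{ellipcond} yields
$$
\int_{\{u>M\}} A(x)\nabla u\cdot\nabla u\,dx + \int_{\omd}(u^3-f)(u-M)^+\,dx \;=\; 0.
$$
Both summands are nonnegative: the first by ellipticity, and the second because on $\{u>M\}$ one has $u^3>M^3=\mathrm{ess\,sup}_{\omd} f\ge f$ a.e., while $(u-M)^+$ vanishes off this set. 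Hence both vanish, and the strict positivity of $(u^3-f)(u-M)^+$ on $\{u>M\}$ forces $|\{u>M\}|=0$, i.e.\ $u\le M$ a.e.

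The lower bound is entirely symmetric: test with $\phi:=(u-m)^-\in H^1(\omd)$, whose gradient is $-\nabla u\,\chi_{\{u<m\}}$ a.e. The corresponding identity reads
$$
-\int_{\{u<m\}} A(x)\nabla u\cdot\nabla u\,dx + \int_{\omd}(u^3-f)(u-m)^-\,dx \;=\; 0,
$$
and on $\{u<m\}$ one has $u^3<m^3=\mathrm{ess\,inf}_{\omd} f\le f$, so the integrand of the second term is $\le 0$; combined with the $\le 0$ ellipticity contribution this again forces $|\{u<m\}|=0$.

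No serious obstacle is anticipated: the strict monotonicity of $t\mapsto t^3$ pairs perfectly with the truncation, and the only technical care needed is the standard Stampacchia chain-rule justification of $(u-M)^+$ and $(u-m)^-$ as valid $H^1(\omd)$ test functions together with the pointwise formulas for their gradients, both of which are available thanks to the Lipschitz assumption on $\omd$ inherited from the setting of Theorem \ref{exist}.
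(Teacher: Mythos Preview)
Your proof is correct and follows essentially the same Stampacchia truncation strategy as the paper. The only cosmetic difference is that the paper uses a smooth auxiliary function $G\in C^1(\mathbb{R})$ with $G(t)=0$ for $t\le 0$ and $0<G'\le\gamma$ for $t>0$, testing against $G(u-\alpha)$ with $\alpha=(\mathrm{ess\,sup}\,f)^{1/3}$, whereas you test directly against $(u-M)^+$; after the same algebraic rearrangement both arguments reduce to the observation that $(u-\alpha)(u^2+u\alpha+\alpha^2)$ times the truncation is nonnegative, forcing $u\le\alpha$ a.e. (As the paper itself notes in Remark~\ref{remboundsu}, no regularity of $\omd$ is actually needed here, so your final caveat about the Lipschitz assumption is harmless but unnecessary.)
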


\begin{proof}
  We'll use a truncation method due to Stampacchia (see \cite{brezis}). Consider a function $G:\R\rightarrow\R$ such that $G\in C^1(\R)$ and
  \begin{itemize}
    \item $G(t)\geq0$
    \item $G(t)=0$ for $t\leq0$
    \item $0<G^\prime\leq\gamma$ for $t>0$.
  \end{itemize}

  For some $\alpha\in \mathbb{R}$ consider $G(u-\alpha)$; because of the above definition $G(u-\alpha)$ belongs to $H^1(\omd)$ for every $\alpha$ and therefore can be taken as test function in \eqref{weakfucav} to get
\begin{equation}
\label{weakfuG}
\int_{\omd} G^\prime(u-\alpha)(A(x)\nabla u)\cdot\nabla u + \int_{\omd}u^3 G(u-\alpha)=\int_{\omd} f G(u-\alpha)  ;
\end{equation}

subtracting $\displaystyle{\int_{\omd}\alpha^3 G(u-\alpha)}$ from both sides we obtain

 \begin{equation}
\label{weakfuGG}
\int_{\omd} G^\prime(u-\alpha)(A(x)\nabla u)\cdot\nabla u + \int_{\omd}\left(u^3-\alpha^3\right) G(u-\alpha)=\int_{\omd} (f-\alpha^3) G(u-\alpha).
\end{equation}

Now observe that the first integral on the left hand side of \eqref{weakfuGG} is $\geq0$ because of the definition of $G$ and the ellipticity condition; if we now choose $\alpha=\left(\sup_{\omd}f\right)^{1/3}$  the right hand side is $\leq0$. Hence,

$$\int_{\omd}\left(u^3-\alpha^3\right) G(u-\alpha)=
\int_{\omd}(u-\alpha)G(u-\alpha)(u^2+\alpha u+\alpha^2)\leq 0\,.$$

Since $t G(t)\ge 0$ $\,\,\forall t\in \mathbb{R}\,\,$ and also $u^2+\alpha u+\alpha^2\geq0$ the above inequality implies
$(u-\alpha)G(u-\alpha)= 0$ a.e. and therefore $u\leq\alpha=\left(\sup_{\omd}f\right)^{1/3}$ a.e.\,, which is the right hand side inequality in \eqref{boundsu}. For the left hand side inequality apply the same argument to $-u$.

\end{proof}

\begin{remark}
\label{remboundsu}
Observe that in the proof of estimates \eqref{apriori} and  \eqref{boundsu} we did not use any regularity of the set $\Omega_D$.
\end{remark}


\section{The direct problem - Well posedness when $\partial D$ has finite Hausdorff measure}
We now generalize the well-posedness of the direct problem to  the more general class of cavities $D\in\mathcal{D}$ satisfying 
\begin{equation}\label{rectif}
\mathcal{H}(\partial D)<\infty
\end{equation}
where $\mathcal{H}$ denotes the $1$ dimensional Hausdorff measure, $\partial D$ the topological boundary of $D$.

To this aim, we need some additional assumptions on the source term $f$ in Problem (\ref{probcav}):

\smallskip
\begin{assumption}\label{as:6}
\begin{equation}
 f \in L^{\infty}(\Omega),\,\, f\geq 0, \,\,\,\mathrm{supp}(f)\subset\Omega_1,
  \end{equation}
  where $\Omega_1$ is defined in the Assumption 2.
  \end{assumption}
\smallskip
As we will see in the next section, these assumptions are also convenient in investigating uniqueness of the inverse problem. 
\smallskip




\smallskip
We recall that (measurable) subsets whose boundary satisfies \eqref{rectif} are in particular subsets of \emph{finite perimeter} in $\RE^2$ (see e.g. \cite{EG} chapter 5). 
Moreover, by approximation results in geometric measure theory (see e.g. \cite{BP}, \cite{CT}), for every  $D\in \mathcal{D}$
satisfying (\ref{rectif}) there exists a sequence of Lipschitz (in fact, smooth) domains $\Omega_k$  such that $\Omega_1\subset \Omega_k\subset\om_D$ and
\begin{equation}\label{eqdeftilD}
\lim_{k\to\infty}|\Omega_D\setminus \Omega_k|=0\,.
\end{equation}

For any $k\in \mathbf{N}$,  let's now consider the perturbed problem

\begin{equation}
\label{problemvn}
\int_{\omd}\left(A(x)\nabla v\right)\cdot\nabla\phi + \int_{\Omega_D\setminus\Omega_k}v\phi + \int_{\Omega_k}v^3\phi = \int_{\omd}f\phi
\end{equation}

i.e. with a non-linearity of the form

\[
v\mapsto v \chi_{\Omega_D\setminus\Omega_k}+v^3 \chi_{\Omega_k}\,,
\]

where $\chi_E$ denotes the indicator function of a set $E$.

\medskip
The corresponding nonlinear operator (still denoted by $T$) defined by the left hand side of \eqref{problemvn} i. e.

\begin{equation}
\label{opTperfin}
\left\langle Tv,\phi\right\rangle=\int_{\omd}\left(A(x)\nabla v\right)\cdot\nabla\phi + \int_{\Omega_D\setminus\Omega_k}v\phi + \int_{\Omega_k}v^3\phi
\end{equation}

is well defined in $H^1(\omd)$ since the nonlinear term can still be bounded by Sobolev embedding theorem (the boundary of $\om_k$ is Lipschitz). Moreover, $T$ is a potential operator being the derivative of

\begin{equation}
E(v)=\frac{1}{2}\int_{\omd}\left(A(x)\nabla v\right)\cdot\nabla v +\frac{1}{2} \int_{\Omega_D\setminus\Omega_k}v^2+ \frac{1}{4}\int_{\Omega_k}v^4 - \int_{\omd}fv
\end{equation}

Then, following the proof of Theorem \ref{exist} we can still show (see Appendix) that $T$ is \emph{bounded, strictly monotone and coercive}.
It follows that problem \eqref{problemvn} has a unique solution $v_k\in H^1(\omd)$ for every $k$.

Moreover, it turns out that estimates in the previous section also hold for the $v_k$'s; 
note that, in establishing the bound \eqref{boundsuperfin} below, we now exploit the positivity and the localization of the source $f$.
\bigskip

\begin{lemma}
\label{stimesolperfin}
Let $v_k\in H^1(\omd)$ be a solution to \eqref{problemvn}. Then,

\begin{equation}
\label{aprioriperfin}
\|v_k\|_{H^1(\omd)}\le \frac{1}{\lambda}\,\|f\|_{(H^1)'}+|\omd |^{1/3}\|f\|^{1/3}_{(H^1)'}\,.
\end{equation}

Furthermore,  if $f$ satisfies Assumption \ref{as:6}

\begin{equation}\label{boundsuperfin}
0\leq v_k(x) \leq\left(\mathrm{ess}\,\sup_{\omd}f\right)^{1/3}\quad\quad \mathrm{a.e.}\quad x\in\omd\,.
\end{equation}

\end{lemma}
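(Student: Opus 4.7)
The plan is to mirror the proofs of Proposition \ref{mainest} and Theorem \ref{Linfinity}, carefully accounting for the piecewise nonlinearity $v_k\chi_{\Omega_D\setminus\Omega_k} + v_k^3\chi_{\Omega_k}$ appearing in \eqref{problemvn}. Since both summands are monotone and vanish at zero, the energy test against $v_k$ and the Stampacchia truncation against $G(v_k-\alpha)$ will continue to generate the right sign conditions with only a little extra bookkeeping.

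For the $H^1$ bound \eqref{aprioriperfin}, I would take $\phi=v_k$ in \eqref{problemvn} and use the ellipticity condition to obtain
\begin{equation*}
\lambda\|\nabla v_k\|_{L^2(\omd)}^2 + \|v_k\|_{L^2(\Omega_D\setminus\Omega_k)}^2 + \|v_k\|_{L^4(\Omega_k)}^4 \le \|f\|_{(H^1)'}\,\|v_k\|_{H^1(\omd)}.
\end{equation*}
From this I extract three inequalities separately: $\|\nabla v_k\|_{L^2(\omd)}^2 \le \lambda^{-1}\|f\|_{(H^1)'}\|v_k\|_{H^1}$; the direct bound $\|v_k\|_{L^2(\Omega_D\setminus\Omega_k)}^2 \le \|f\|_{(H^1)'}\|v_k\|_{H^1}$; and, mimicking the Hölder step of Proposition \ref{mainest}, $\|v_k\|_{L^2(\Omega_k)}^2 \le |\Omega_k|^{1/2}\|v_k\|_{L^4(\Omega_k)}^2 \le |\omd|^{1/2}(\|f\|_{(H^1)'}\|v_k\|_{H^1})^{1/2}$. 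Summing yields an inequality of the form $\|v_k\|_{H^1}^2 \le c\|f\|_{(H^1)'}\|v_k\|_{H^1} + (|\omd|\,\|f\|_{(H^1)'})^{1/2}\|v_k\|_{H^1}^{1/2}$, to which I apply the dichotomy argument at the end of the proof of Proposition \ref{mainest}. The estimate is uniform in $k$ because the only geometric quantity that enters is $|\omd|$.

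For \eqref{boundsuperfin} I would use Stampacchia's truncation with the same cutoff family $G\in C^1(\mathbb{R})$ as in Theorem \ref{Linfinity}. The crucial simplification provided by Assumption \ref{as:6} is that $\mathrm{supp}(f)\subset\Omega_2\subset\Omega_1\subset\Omega_k$, so $f\equiv 0$ on the linear region $\Omega_D\setminus\Omega_k$. Choosing $\alpha=(\mathrm{ess\,sup}_{\omd}f)^{1/3}\ge 0$ and subtracting $\alpha G$ on $\Omega_D\setminus\Omega_k$ and $\alpha^3 G$ on $\Omega_k$ from the identity obtained by testing against $G(v_k-\alpha)$, I would reach
\begin{equation*}
\int_{\omd}G'(v_k-\alpha)A\nabla v_k\cdot\nabla v_k + \int_{\Omega_D\setminus\Omega_k}(v_k-\alpha)G + \alpha\!\int_{\Omega_D\setminus\Omega_k}\!G + \int_{\Omega_k}(v_k^3-\alpha^3)G = \int_{\Omega_k}(f-\alpha^3)G.
\end{equation*}
All left-hand terms are nonnegative (ellipticity; monotonicity of $t\mapsto t$ and $t\mapsto t^3$ combined with $tG(t)\ge 0$; and $\alpha\ge 0$), while the right-hand side is $\le 0$ by the choice of $\alpha$. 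Hence each term vanishes, and factoring $v_k^3-\alpha^3=(v_k-\alpha)(v_k^2+\alpha v_k+\alpha^2)$ with $v_k^2+\alpha v_k+\alpha^2>0$ gives $(v_k-\alpha)G(v_k-\alpha)=0$ a.e.\ on both subdomains, so $v_k\le\alpha$ a.e.\ on $\omd$. The lower bound $v_k\ge 0$ follows analogously by testing \eqref{problemvn} against $G(-v_k)$: on $\{v_k<0\}$ the three terms $v_kG(-v_k)$, $v_k^3G(-v_k)$ and $-G'(-v_k)A\nabla v_k\cdot\nabla v_k$ are all $\le 0$, while $\int fG(-v_k)\ge 0$ thanks to $f\ge 0$, forcing $\{v_k<0\}$ to have measure zero.

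The main delicate point is the Stampacchia step: the two different powers in the nonlinearity produce two distinct thresholds, $\alpha$ on the linear piece and $\alpha^3$ on the cubic piece, and it is precisely the condition $\mathrm{supp}(f)\subset\Omega_k$ from Assumption \ref{as:6} that lets a single choice of $\alpha$ render the right-hand side non-positive simultaneously over both subdomains and thus close the argument.
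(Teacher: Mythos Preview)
Your proposal is correct and follows essentially the same approach as the paper: test with $\phi=v_k$ and run the H\"older/dichotomy argument of Proposition \ref{mainest} for \eqref{aprioriperfin}, then test with $\phi=G(v_k-\alpha)$ exploiting $\mathrm{supp}(f)\subset\Omega_k$ for \eqref{boundsuperfin}. The only cosmetic differences are that the paper groups $\|\nabla v_k\|^2_{L^2(\omd)}$ together with $\|v_k\|^2_{L^2(\Omega_D\setminus\Omega_k)}$ before dividing by $\lambda$ (so as to land on the exact constant $1/\lambda$), and keeps the term $\alpha\int_{\Omega_D\setminus\Omega_k}G(v_k-\alpha)$ on the right-hand side rather than the left.
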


\begin{proof}
Letting $\phi=v_k$ in \eqref{problemvn} and by the ellipticity condition, we get

\begin{equation}
\label{stim1perfin}
\lambda\|\nabla v_k\|^2_{L^2(\omd)}+\int_{\Omega_D\backslash \Omega_k}v_k^2+\int_{\Omega_k}v_k^4
\le\|f\|_{(H^1)'}\|v_k\|_{ H^1(\omd)}.
\end{equation}

In particular

\begin{equation}
\nonumber
\|\nabla v_k\|^2_{L^2(\omd)}+\|v_k\|^2_{L^2(\Omega_D\backslash \Omega_k)}
\le \frac{\|f\|_{(H^1)'}}{\lambda}\|v_k\|_{ H^1(\om)}
\end{equation}

and
\begin{equation}
\nonumber
\|v_k\|^2_{L^2(\om_k)}\le \big (|\omd|\,\|f\|_{(H^1)'}\|v_k\|_{ H^1(\omd)}\big )^{1/2}.
\end{equation}

Summing up the two estimates term by term we get again

\begin{equation}
\nonumber
\|v_k\|^2_{H^1(\omd)}\le \frac{1}{\lambda}{\|f\|_{(H^1)'}}\|v_k\|_{ H^1(\omd)}+{\big (|\omd|\,\|f\|_{(H^1)'}\big )^{1/2}}\|v_k\|^{1/2}_{ H^1(\omd)}
\end{equation}

and the  bound \eqref{aprioriperfin} is proven by the same argument following \eqref{estint}.

\smallskip
Let us now take $f\in L^2(\omd)$ satisfying Assumption \ref{as:6} and consider a function $G$ as in the proof of Theorem \ref{Linfinity}. Then, choose $\phi=G(v_k-\alpha)$ in \eqref{problemvn}, where $\alpha\ge 0$ and get

\begin{equation}
\nonumber
\int_{\omd} G^\prime(v_k-\alpha)(A(x)\nabla v_k)\cdot\nabla v_k + \int_{\Omega_D\backslash \Omega_k}v_k G(v_k-\alpha)
+\int_{\Omega_k}v_k^3 G(v_k-\alpha)=\int_{\Omega_k} f G(v_k-\alpha)\,.
\end{equation}

Subtracting on both sides the term $\displaystyle{\int_{\om_k}\alpha^3G(v_k-\alpha)}$, we obtain the equivalent equation

$$
\int_{\omd} G^\prime(v_k-\alpha)(A(x)\nabla v_k)\cdot\nabla v_k +
\int_{\omd}\big [(v_k-\alpha)\chi_{\Omega_D\backslash \Omega_k}+(v^3_k-\alpha^3)\chi_{\Omega_k}
 \big ]G(v_k-\alpha)=
$$

\begin{equation}
\label{weakfuGperfin}
  = \int_{\Omega_k} (f-\alpha^3) G(v_k-\alpha)-\alpha\int_{\Omega_D\backslash \Omega_k}G(v_k-\alpha)\,.
\end{equation}

By choosing $\alpha=\left(\sup_{\omd}f\right)^{1/3}\ge 0$, the right hand side is $\leq0$.
 Hence,

$$
\int_{\omd}\big [(v_k-\alpha)\chi_{\Omega_D\backslash \Omega_k}+(v^3_k-\alpha^3)\chi_{\Omega_k}
 \big ]G(v_k-\alpha)=
 $$

$$
\int_{\omd}(v_k-\alpha)\big [\chi_{\Omega_D\backslash \Omega_k}+(v^2_k+v_k\alpha+\alpha^2)\chi_{\Omega_k}
 \big ]G(v_k-\alpha)\le 0\,.
$$

Since the term in the square brackets is $\ge \min\{1, 3\alpha^2/4\}$ in $\omd$, we conclude as in Theorem \ref{Linfinity} that
$v_k\leq\alpha=\left(\sup_{\omd}f\right)^{1/3}$ a.e.\, in $\omd$.

Finally, the bound from below follows by applying the same arguments to $-v_k$.
\end{proof}

By the previous  estimates we can now prove:
\begin{theorem}
\label{existfinper}
Assume $D\in \mathcal{D}$ satisfying (\ref{rectif}) and let $f$ satisfy Assumption \ref{as:6}.
Then problem \eqref{probcav} has a unique solution $v\in H^1(\omd)\cap L^{\infty}(\omd)$. Furthermore, $v$ satisfies the bounds \eqref{aprioriperfin}, \eqref{boundsuperfin}.
\end{theorem}

\begin{proof}
Let $\{\om_k\}\subset \Omega$ be a sequence of domains chosen as at the beginning of the section; note that
$\,\mathrm{supp}\,f\subset\Omega_k$.

Thus, Lemma \ref{stimesolperfin} holds and the solutions ${v_k}$, $k\in \mathbf{N}$, to \eqref{problemvn}
are uniformly bounded in $H^1(\omd)$.  Hence,  there is a subsequence (still denoted by $v_k$) and a function $v$ such that
$\,v_k\rightharpoonup v\,$ in  $\,H^1(\omd)$ and a.e. in $\omd$. Moreover, $\|v_k\|_{L^{\infty}(\omd)}$ is uniformly bounded by $\|f\|_{L^{\infty}(\omd)}^{1/3}$.

Then, by writing \eqref{problemvn} in the form

\begin{equation}
\nonumber
\int_{\omd}\left(A(x)\nabla v_k\right)\cdot\nabla\phi + \int_{\omd}v_k^3\phi
+\int_{\Omega_D\setminus\Omega_k}(v_k-v_k^3)\phi   = \int_{\omd}f\phi\,,
\end{equation}


we easily get, by the weak convergence and the dominated convergence theorem,

\begin{equation}
\label{problemvn2}
\int_{\omd}\left(A(x)\nabla v\right)\cdot\nabla\phi + \int_{\omd}v^3\phi
 = \int_{\omd}f\phi\,,
\end{equation}

that is, $v$ solves \eqref{weakfucav}. Uniqueness follows from the fact that the operator defined by the left hand side of \eqref{problemvn2} is strictly monotone in $H^1(\omd)$.

Finally, since $\|v\|_{H^1(\omd)}\le \liminf \|v_k\|_{H^1(\omd)}$ and $v_k\rightarrow v$ a.e. in $\omd$, the bounds
\eqref{aprioriperfin} and \eqref{boundsuperfin} also hold for the solution $v$.

\end{proof}

\begin{remark}
\label{boundsolperfin1}
Theorem \ref{existfinper} and the above remark apply in particular to the problem with a cavity $D$ such that
$\partial D\cap\partial\om\neq\emptyset$. Note that in this case the boundary
of $\omd$ might not be Lipschitz even if $\partial\om$ and $\partial D$ are regular.
\end{remark}

\begin{remark}
\label{boundsolperfin2}
Obviously, the results in this section hold for any $D\in\mathcal{D}$ such that $\om_D$ is the limit (in measure) of Lipschitz domains $\Omega_k$, with $\Omega_1\subset \Omega_k\subset\om_D$. It is not difficult to check that such $D$'s have finite perimeter. 

The extension to the whole class of finite perimeter subsets $D\in\mathcal{D}$ remains an open problem.
\end{remark}

\section{Uniqueness of the inverse problem}


In this section we investigate the uniqueness of the inverse problem, i. e.
\begin{problem}
\label{inverseproblem}
\label{ext} Assume it is possible to measure the solution to (\ref{probcav}), $u\big|_\Sigma$ where $\Sigma$ is an open connected portion of $\partial\Omega_1\cap\partial\Omega$. Is it possible to uniquely determine $D$? 

\end{problem}
The answer is yes assuming the coefficient matrix $A$ has constant entries.

\begin{theorem}
Let Assumptions  \ref{as:1}-\ref{as:6} hold, $A$ constant matrix and $D_1$, $D_2$ in $\mathcal{D}$ where $D_1=\cup_{j=1}^N D_1^j $  and $D_2=\cup_{j=1}^M D_2^j$ and assume  $D_1,D_2$ with Lipschitz boundary.

Let  $u_1$ and $u_2$ be the corresponding solutions to the boundary value problem \eqref{probcav}.

Then $u_1\big|_\Sigma=u_2\big|_\Sigma$ implies $D_1\equiv D_2$ (which in particular implies $N=M$).

\end{theorem}
\begin{proof}







Without loss of generality we will prove the result for the Laplacian.
Assume first that the cavities are strictly contained in $\Omega$. 
Let $u_1=u_2$ on $\Sigma$;
we will prove the theorem through an argument by contradiction.  Assume $D_1\neq D_2$ then setting $w:=u_1-u_2$ we have that

\begin{equation}\label{}
  w\big|_\Sigma=0 \ \ \ \ \ \ {\rm and} \ \ \ \ \ \ \displaystyle{\nabla w \cdot \nu \big|_\Sigma}=0
\end{equation}

Moreover $w$ is a solution to

\begin{equation}\label{geneqw}
-\Delta w+ q(x)\,w=0 \ \ \ \ \hbox{in $\Omega \setminus( D_1\cup D_2)$}
\end{equation}

where $q(x)=u_1^2+u_1 u_2+u_2^2$. By (\ref{boundsu}) we have
$$\|q\|_{L^\infty(\Omega\setminus( D_1\cup D_2))}\leq C,$$
with $C$ a constant depending on $\|f\|_{L^\infty(\Omega_D)}$. Uniqueness for the Cauchy problem  together with  the weak unique continuation property (see for example \cite{ARRV})  implies

\begin{equation}\label{}
  w\equiv0 \ \ \ \ \ \ {\rm in} \ \ \ \ \ G
\end{equation}

where $G$ is the connected component of $\overline{\Omega}\setminus D_1\cup D_2$ that contains $\Sigma$. Let $\tilde{G}=\Omega\setminus G$ and observe that:

$\tilde{G}$ is closed, $\tilde{G}\supseteq D_1\cup D_2$ and

\begin{equation}\label{tildGbd}
\partial\tilde{G}=\big (\partial D_1\cup \partial D_2\big )\cap \partial G\,.
\end{equation}

 Let $\tilde{D}$ be a connected component of $\tilde{G}\setminus D_2$
 (note that $\tilde{D}=D_1$ if $D_1\cap D_2=\emptyset$). Then we have

\begin{equation}\label{tildDbd}
\partial\tilde D\subseteq \partial\big (\tilde{G}\setminus D_2 \big )\subseteq\partial\tilde G\cup\partial D_2\,.
\end{equation}

We further note that, unless $D_1\subset D_2$, we may assume that $\tilde{D}$ contains a subset of $D_1$ with nonempty interior. Otherwise, we just exchange the roles of $D_1$ and $D_2$.

\smallskip
Let us now define
$\Gamma_1\equiv\partial\tilde{D}\cap\partial D_1\cap\partial\tilde{G}$ and let $\partial\tilde{D}=\Gamma_1\cup \Gamma_2$. Observe that (\ref{tildGbd}) implies that $\Gamma_1\subset\partial D_1\cap \partial G$ and from (\ref{tildDbd}) $\Gamma_2\equiv\partial\tilde{D}\setminus\Gamma_1\subset\partial D_2$ including possibly the case where $\Gamma_2=\emptyset$.
\smallskip
In figures \ref{fig:ins1} and \ref{fig:ins2} the shaded regions show examples of possible geometries.

\begin{figure}
  \centering
  {		    	
\begin{tikzpicture}
    \draw [red!80,fill=red!20,thick] (-1,0) ellipse (0.4 and 0.7);
    \draw [red!80,fill=red!20,thick] (1,0) ellipse (0.4 and 0.7);
    \draw [red!80,fill=red!20,thick] (0,-1) ellipse (0.7 and 0.4);
    \draw [red!80,fill=red!20,thick] (0,1) ellipse (0.7 and 0.4);
    
    \draw [blue!80,fill=blue,fill opacity=.2,thick] (-1,1) circle (0.8);
    \draw [blue!80,fill=blue,fill opacity=.2,thick] (1,1) circle (0.8);
    \draw [blue!80,fill=blue,fill opacity=.2,thick] (1,-1) circle (0.8);
    \draw [blue!80,fill=blue,fill opacity=.2,thick] (-1,-1) circle (0.8);
    
    \matrix [left] at (3.5,0) {
  \node [shape=rectangle,fill=blue!30,label=right:$D_1$] {}; \\
  \node [shape=rectangle,fill=red!30,label=right:$D_2$] {}; \\
};
\end{tikzpicture}
\hspace{0.075\textwidth}
\begin{tikzpicture}
    \draw [blue,fill=black!20,thick](-1,1) circle (0.8);
    \draw [blue,fill=black!20,thick] (1,1) circle (0.8);
    \draw [blue,fill=black!20,thick] (1,-1) circle (0.8);
    \draw [blue,fill=black!20,thick] (-1,-1) circle (0.8);
    \draw [black!20,fill=black!20] (0,0) circle (0.8);
    
    \draw [white,fill=white,thick] (-1,0) ellipse (0.4 and 0.7);
    \draw [white,fill=white,thick] (1,0) ellipse (0.4 and 0.7);
    \draw [white,fill=white,thick] (0,-1) ellipse (0.7 and 0.4);
    \draw [white,fill=white,thick] (0,1) ellipse (0.7 and 0.4);
    
    \draw[thick, red] (-1,0) [partial ellipse=-155:155:0.4 and 0.7];
    \draw[thick, red] (1,0) [partial ellipse=25:335:0.4 and 0.7];
    \draw[thick, red] (0,1) [partial ellipse=115:425:0.7 and 0.4];
    \draw[thick, red] (0,-1) [partial ellipse=-65:245:0.7 and 0.4];
    
    \matrix [left] at (3.5,0) {
  \node [shape=rectangle,fill=black!20,label=right:$\tilde{D}$] {}; \\
  \node [label=right:$\Gamma_1$] {\blueline}; \\
  \node [label=right:$\Gamma_2$] {\redline}; \\
};
\end{tikzpicture}
}
  \caption{Possible configuration: example 1}\label{fig:ins1}
\end{figure}

\begin{figure}
  \centering
  {		    	
\begin{tikzpicture}
    \draw [red!80,fill=red!20,thick] (0,1.25) ellipse (1.2 and 0.4);
    \draw [red!80,fill=red!20,thick] (0,0) ellipse (1.2 and 0.4);
    \draw [red!80,fill=red!20,thick] (0,-1.25) ellipse (1.2 and 0.4);
    
    \draw [blue!80,fill=blue,fill opacity=.2,thick] (-1,0) ellipse 
    (0.75 and 2);
    \draw [blue!80,fill=blue,fill opacity=.2,thick] (1,0) ellipse (0.75 and 2);
    
    \matrix [left] at (3.5,0) {
  \node [shape=rectangle,fill=blue!30,label=right:$D_1$] {}; \\
  \node [shape=rectangle,fill=red!30,label=right:$D_2$] {}; \\
};
\end{tikzpicture}
\hspace{0.1\textwidth}
\begin{tikzpicture}
    \draw [blue,fill=black!20,thick] (-1,0) ellipse (0.75 and 2);
    \draw [blue,fill=black!20,thick] (1,0) ellipse (0.75 and 2);
    \draw [black!20,fill=black!20] (0,0) circle (1);
    
    \draw [white,fill=white] (0,1.25) ellipse (1.2 and 0.4);
    \draw [white,fill=white] (0,0) ellipse (1.2 and 0.4);
    \draw [white,fill=white] (0,-1.25) ellipse (1.2 and 0.4);
   
    \draw[thick, red] (0,1.25) [partial ellipse=118:423:1.2 and 0.4];
    \draw[thick, red] (0,0) [partial ellipse=0:360:1.2 and 0.4];
    \draw[thick, red] (0,-1.25) [partial ellipse=-63:243:1.2 and 0.4];

    \matrix [left] at (3.5,0) {
  \node [shape=rectangle,fill=black!20,label=right:$\tilde{D}$] {}; \\
  \node [label=right:$\Gamma_1$] {\blueline}; \\
  \node [label=right:$\Gamma_2$] {\redline}; \\
};

\end{tikzpicture}
}
  \caption{Possible configuration: example 2}\label{fig:ins2}
\end{figure}

\begin{itemize}
\item Assume first that $\tilde{D}$ is Lipschitz.
Observe that on $\partial\tilde{D}$ we have $\displaystyle{\frac{\partial u_2}{\partial\mathbf{n}}=0}$ a.e..  Actually,  on $\Gamma_1$ we have
$\displaystyle{\frac{\partial u_1}{\partial\mathbf{n}}=0}$ a.e.  and observing that on $\Gamma_1\subset\partial G$   $\displaystyle{\frac{\partial u_1}{\partial\mathbf{n}}}=\frac{\partial u_2}{\partial\mathbf{n}}$ a. e.,  it follows that  $\displaystyle{\frac{\partial u_2}{\partial\mathbf{n}}=0}$ a.e. on $\Gamma_1$.  Moreover, on $\Gamma_2$ we have that
$\displaystyle{\frac{\partial u_2}{\partial\mathbf{n}}=0}$ a.e.\\
Also,  $u_2\in H^1(\tilde{D})$ is solution of
 \begin{equation}\label{geneq1}
  -\Delta u_2 + u^3_2=0 \ \ \ \ \hbox{in $\tilde{D}$}
\end{equation}
since $\tilde{D}\cap\Omega_1=\emptyset$. Hence,  multiplying the equation by $u_2$ and integrating by parts over the set $\tilde{D}$, we have
 \begin{equation}\label{geneq2}
  \int_{\tilde{D}}\nabla u_2\cdot \nabla u_2 +u^4_2 =0,
\end{equation}
which implies that necessarily $u_2=0$ a.e. in $\tilde{D}$ and, by unique continuation,  also in
$G\setminus {\rm supp}\,f$.
Now, let $K\subset\ G$ and $K\supset{{\rm supp}\,f}$ be a smooth domain. Then letting $v_2=u_2^3$,  $u_2$ satisfies

\begin{equation}
\label{pbK}
\left\{
  \begin{array}{ll}
    -\Delta u_2+v_2 u_2=f, & \hbox{in $K$} \\
    \displaystyle{\frac{\partial u_2}{\partial\mathbf{n}}}=0, & \hbox{on $\partial K$} \\
    u_2=0, & \hbox{on $\partial K$} ,
  \end{array}
\right.
\end{equation}

Consider $\psi$, any solution of the Schr\"odinger equation

\begin{equation}
    -\Delta\psi+v_2\psi=0, \ \ \ \hbox{in $K$};
\end{equation}

we'll show that $\int_{K} f\,\psi\,=0$.

To see this multiply equation in \eqref{pbK} by $\psi$, integrate twice by parts and use conditions on $u_2$ in \eqref{pbK}

\begin{align}\nonumber
\int_{K} f\,\psi\,=\int_{K}\big[-\Delta u_2\psi+v_2 u_2 \psi\big]=-\int_{\partial K}\frac{\partial u_2}{\partial\mathbf{n}}\,\psi +\int_{K}\big[\nabla u_2\cdot\nabla\psi+v_2 u_2\psi\big]=\\
=\int_{\partial K}u_2\,\frac{\partial \psi}{\partial\mathbf{n}} +\int_{K}\big[-\Delta\psi u_2+v_2 u_2 \psi\big]=\int_{K}u_2\big[-\Delta\psi +v_2 \psi\big]=0.
\end{align}

Now choose $\psi$ such that $\psi\big|_{\partial K}=\alpha<0$: then maximum principle implies that $\psi<0$ in $K$ and from $\displaystyle{\int_{K} f\,\psi}=0$
with $f\geq 0$; we conclude that $f\equiv 0$ in $K$ which contradicts the initial hypotheses.

 \item Now consider the case where $\tilde{D}$ is not  Lipschitz.  Observe that singularities on $\partial\tilde{D}$ can only occur at points of intersection between $\Gamma_1$ and $\Gamma_2$. Let's name them $P^h$, $h=1,\dots,L$. Note that singularities can only be cusps since $\Gamma_1$ and $\Gamma_2$ are Lipschitz.
We will now construct an approximation of $\tilde{D}$ with a sequence of Lipschitz domains, $\{\tilde{D}_k\}\subset\tilde{D}$. For each $P^j$ take two sequences $\{P_k^h\}\subset\Gamma_1$ and $\{{Q}_k^h\}\subset\Gamma_2$ such that both the arcs $\widearc{{P^h}{P_k^h}} \subset\Gamma_1$ and $\widearc{P^h {Q}_k^h}\subset\Gamma_2$ have length which tends to zero as $k\rightarrow +\infty$. For each $h=1,\dots, L$ let $\gamma_k^h$ be smooth arcs joining  $P_k^h$ and ${Q}_k^h$, so that the region $\tilde{D}_k$ enclosed by the $(\gamma_k^h)$'s and $\Gamma_1$ and $\Gamma_2$ is Lipschitz. Moreover assume that $\textrm{length}(\gamma_k^h)\rightarrow 0$  and $|\tilde{D}\backslash\tilde{D}_k|\rightarrow 0$ as $k\rightarrow +\infty$.

Again  by Assumption \eqref{as:6}, $u_2\in H^1(\tilde{D}_k)$ is a weak solution of
 \begin{equation}\label{geneq3}
-\Delta u_2+ u_2^3=0 \ \ \ \ \hbox{in $\tilde{D}_k$}
\end{equation}
with $\displaystyle{\frac{\partial u_2}{\partial\mathbf{n}}=0}$ a.e. on $ \displaystyle \partial\tilde{D}_k\backslash \left (\bigcup_{h=1}^L\gamma_k^h\right)$. Hence, multiplying equation (\ref{geneq3}) by $u_2$,  integrating by parts over $\tilde{D}_k$ and using the fact that  $u_2 \in H^{3/2}(\Omega_{D_2})$, see for example \cite{JK}, \cite{C}, it follows that
\begin{equation}\label{geneqk}
 \int_{\tilde{D}_k}\nabla u_2\cdot \nabla u_2 +u^4_2(x)=\sum_{h=1}^L\int_{\gamma_k^h}\frac{\partial u_2}{\partial\mathbf{n}}u_2.
\end{equation}
Moreover, since $\tilde{D}_k$ is a Lipschitz domain and $u_2$ is uniformly bounded in $\Omega\backslash D_2$  it follows that
$$
\left|\int_{\gamma_k^h}\frac{\partial u_2}{\partial\mathbf{n}}u_2\right|\leq \left(\int_{\gamma_k^h}\left|\frac{\partial u_2}{\partial\mathbf{n}}\right|^2\right)^{1/2}\left(\int_{\gamma_k^h}|u_2|^2\right)^{1/2}\leq C ||u_2||_{H^{3/2}(\Omega_{D_2})}(\textrm{length}(\gamma^h_k))^{1/2}
$$

for $h=1,...,L$. So, passing to the limit as $k\rightarrow +\infty$ in (\ref{geneqk}) we finally get
$$
 \int_{\tilde{D}}\nabla u_2\cdot \nabla u_2 +u^4_2(x)=0
$$
which again gives $u_2=0$ a.e. in $\tilde{D}$ and, by unique continuation, also in $\left(\om\setminus D_1\cup D_2\right)\setminus{{\rm supp}\,f}$.

From now on we we can proceed as in the previous case and conclude the proof.

 \end{itemize}

Finally, if the cavity touches the boundary of $\Omega$ forming a cusp, it is sufficient to modify the above estimates by replacing the norm 
$\|u \|_{H^{3/2}(\Omega_{D_2})}$ with $\|u \|_{H^{3/2}(\Omega^L_{D_2})}$, where
$\Omega^L_{D_2}$ is any Lipschitz subset satisfying 
$\tilde{D}\subset\Omega^L_{D_2}\subset \Omega_{D_2}$. In fact, it can be shown that the solution $u_2$ (which is uniquely defined in $\Omega_{D_2}$ by theorem \ref{existfinper})
has the required regularity in those subsets.



\end{proof}

\begin{remark} Note that we use the assumption of constant coefficient matrix $A$ only in the second part of the proof and for the case of cavities touching the boundary since we need to guarantee $H^{3/2}$ regularity of solutions. It would be interesting to see whether it is possible to extend  uniqueness to variable matrices $A(x)$.
\end{remark}

\section{Conclusions}
In this paper we analyze the inverse problem of determining insulating planar regions, {\it cavities}, from boundary measurements in a nonlinear elliptic equation which arises in cardiac electrophysiology. 
We first show well-posedness of the forward problem in the class of cavities with boundaries having finite one-dimensional Hausdorff measure. Then we prove unique determination of multiple Lipschitz cavities from knowledge of a single boundary measurement of the potential. This last result has been obtained using unique continuation properties of solutions to linear elliptic equations and a-priori bounds on the solution to the forward problem. \\
We plan to analyze also the three-dimensional case.\\
This paper is also the starting point for the implementation of a reconstruction algorithm based on a phase field approach that will be presented in a forthcoming publication. 

Although the proof of uniqueness of the solution to the inverse problem is independent on the nonlinear structure of the equation, it would be interesting to make a comparison between the reconstruction of the cavity in the nonlinear problem  with the one obtained in the linear conductivity equation. 

\section{Appendix}

We show that the estimates in the proof of theorem \ref{exist} can be repeated to show that also the operator $T$ defined by \eqref{opTperfin} is \emph{bounded, strictly monotone and coercive}.

\bigskip
{\bf i.} $T$ is bounded.

\bigskip

\begin{equation}
\nonumber
|\langle Tu,\phi\rangle|\leq\Lambda\|\nabla u\|_{L^2(\omd)}\|\nabla\phi\|_{L^2(\omd)} +\|u\|_{L^2(\omd\setminus \om_k)}\|\phi\|_{L^2(\omd\setminus \om_k)}+ \|u\|_{L^6(\om_k)}^3\|\phi\|_{L^2(\om_k)}\leq
\end{equation}

\begin{equation}
\nonumber
\leq\Lambda\|\nabla u\|_{L^2(\omd)}\|\nabla\phi\|_{L^2(\omd)} +\|u\|_{L^2(\omd\setminus \om_k)}\|\phi\|_{L^2(\omd\setminus \om_k)}+ C^3_S\|u\|^3_{H^1(\om_k)}\|\phi\|_{L^2(\om_k)}\leq\end{equation}

\begin{equation}
\le\max\left[(\Lambda+1)\|u\|_{H^1(\omd)},C^3_S\|u\|_{H^1(\omd)}^3\right]\|\phi\|_{H^1(\omd)}.
\end{equation}

Therefore, if $u$ belongs to a bounded subset of $H^1(\omd)$,

\begin{equation}
\nonumber
\|Tu\|_{(H^1)'(\omd)}=\sup_{\phi}\frac{|\langle Tu,\phi\rangle|}{\|\phi\|_{H^1(\omd)}}\leq\max\left[\Lambda\|u\|_{H^1(\omd)},(1+C^3_S)\|u\|_{H^1(\omd)}^3\right]=C_4.
\end{equation}

\bigskip

{\bf ii.} $T$ is (strictly) monotone.

\bigskip

\begin{equation}
\nonumber
\langle Tu-Tv,u-v\rangle=\int_{\omd} (A(x)\nabla (u-v))\cdot\nabla (u-v) +\int_{\omd\setminus \om_k} (u-v)^2+ \int_{\om_k} (u-v)^2(u^2+uv+v^2)\geq 0.
\end{equation}

and

\begin{equation}
\label{strictmonnonl}
\langle Tu-Tv,u-v\rangle=0 \ \ \ \ \ \Leftrightarrow \ \ \ u=v.
\end{equation}

\bigskip

{\bf iii.} $T$ is coercive

\begin{equation}
\nonumber
\langle Tu,u\rangle\geq \lambda\int_{\omd} |\nabla u|^2 +\int_{\omd\setminus \om_k}u^2+ \int_{\om_k} u^4\geq
\end{equation}

\begin{equation}
\geq \lambda\|\nabla u\|^2_{L^2(\omd)}+\|u\|^2_{L^2(\omd\setminus \om_k)}+\frac{1}{|\om_k|}
\|u\|^4_{L^2(\om_k)}\geq
\end{equation}

\begin{equation}
\nonumber
\geq\min(1,\lambda)\|u\|^2_{H^1(\omd)}+\frac{1}{|\om_k|}\;
\|u\|^4_{L^2(\om_k)}-\|u\|^2_{L^2(\Omega_k)}.
\end{equation}

\medskip
Finally, as in the previous proof from $|\om_k|^{-1}x^4- x^2 \ge -|\om_k|/4$,  with $x=\|u\|_{L^2(\om_k)}$, we get

\begin{equation}
\label{stimacoercnLip}
\langle Tu,u\rangle\geq \tilde{\lambda}\|u\|^2_{H^1(\omd)}-\frac{1}{4}|\om_k|\,,
\end{equation}
where $\tilde\lambda:=\min(1,\lambda)$;
 hence, \eqref{coerc} follows.

\newpage

\end{document}